\newtheorem{thm}{Theorem}[section]
\newtheorem{prop}[thm]{Proposition}
\newtheorem{defn}[thm]{Definition}
\newtheorem{rem}[thm]{Remark}
\newcommand{\CGA}{\operatorname{CGA}}
\newcommand{\PGA}{\operatorname{PGA}}
\newcommand{\bP}{\mathbf{P}}
\newcommand{\e}{\mathbf{e}}
\newcommand{\bell}{\boldsymbol{\ell}}
\newcommand{\bp}{\mathbf{p}}
\newcommand{\bA}{\mathbf{A}}
\newcommand{\bB}{\mathbf{B}}
\newcommand{\bC}{\mathbf{C}}
\newcommand{\bD}{\mathbf{D}}
\newcommand{\bF}{\mathbf{F}}
\newcommand{\bT}{\mathbf{T}}
\newcommand{\bt}{\mathbf{t}}
\newcommand{\bR}{\mathbf{R}}
\newcommand{\bI}{\mathbf{I}}
\newcommand{\bV}{\mathbf{V}}
\newcommand{\bs}{\mathbf{s}}
\begin{document}

%
%
%
%
%
%
%
%
%

\title[PGA in CGA]
{Projective Geometric Algebra as a Subalgebra of Conformal Geometric algebra}
\author{Ale\v{s} N\'{a}vrat}
\address{%
	Institute of Mathematics,  \\ Faculty of Mechanical Engineering,\\
	Brno University of Technology,\\ Czech Republic}

\email{navrat.a@fme.vutbr.cz}

\author{Jaroslav Hrdina}
\address{%
	Institute of Mathematics,  \\ Faculty of Mechanical Engineering,\\
	Brno University of Technology,\\ Czech Republic}

\email{hrdina@fme.vutbr.cz}

\author{Petr Va\v s\' ik}

\address{%
Institute of Mathematics,  \\ Faculty of Mechanical Engineering,\\
Brno University of Technology,\\ Czech Republic}

\email{vasik@fme.vutbr.cz }


\author{Leo Dorst
}
\address{%
	Informatics Institute,  \\ Faculty of Science,\\
	University of Amsterdam,\\ The Netherlands}

\email{l.dorst@uva.nl}

\thanks{The first three authors were
supported by a grant
no.:~FSI-S-20-6187.}

\subjclass{Primary 15A66; Secondary 51N25}

\keywords{Conformal geometric algebra, Projective geometric algebra, Euclidean geometry}


\begin{abstract}
We show that if PGA is understood as a subalgebra of CGA in mathematically correct sense, then the flat objects share the same representation in PGA and CGA. Particularly, we treat duality in PGA. This leads to unification of PGA and CGA objects which is important especially for software implementation and symbolic calculations.
\end{abstract}

\maketitle

\section{Introduction}
Projective geometric algebra (PGA) is the right model for Euclidean geometry and computations with flat objects, see \cite{gunn1,gunn2}. Conformal geometric algebra (CGA) contains the same model and, moreover, allows transformations of round objects and dilation (conformal geometry), see \cite{hild1,hild2, h0,h1}. Clearly, PGA is a subalgebra of CGA but the representation of Euclidean geometry looks very different at the first sight. The PGA representation of a point is a multivector of grade $n-1$ while  a CGA point is of grade 1. This indicates that we have to think dually, or in other words in a complementary way. In what follows, we clarify how PGA can be viewed in CGA. The inclusion leads us to a geometric definition of PGA duality which slightly differs from the definition in literature, compare \cite{3dpga} and Table \ref{PGAdualitytable}. Consequently, a unified approach to both algebras is introduced. We treat the case $n=3$ in this paper, however, the results hold for arbitrary dimension $n$, in particular also for $n=2$.

First, we introduce briefly the frameworks of CGA and PGA as models of Euclidean geometry and we summarize basic formulae in  Section \ref{GAforE}. In Section \ref{PinC}, we show that there are two naturally related copies of PGA in CGA, see Proposition \ref{PGAinCGA}. After the identification of the two copies, duality in PGA is obtained in terms of CGA operations, see Proposition \ref{PGAdualityCGA}. This directly describes the correspondence between flat objects and versors for Euclidean transformations in CGA, and the objects and versors in PGA, see Proposition \ref{GeometricEmbedding}. Basic ideas are then demonstrated on a simple example.

\subsection{Notation}
We denote elements of geometric algebras by bold letters - capitals  for general multivectors and lower case letters for vectors. We also set the notation such that we can easily distinguish the elements of different algebras. Namely, 
$\bA,\bB,\dots$ will denote multivectors in PGA, $\bA_c,\bB_c,\dots$ will denote multivectors in CGA and $\bA_E,\bB_E,\dots$ will denote $\mathbb{G}_3$ elements.
Similar notation will be used for objects and transformations of algebras, namely  $\bP,\bell,\bp$ will denote a point, line and plane in PGA, respectively, while $\bP_c,\bell_c,\bp_c$ will denote direct representations of the respective objects in CGA. The corresponding dual representations will be accented by star superscript.  By $\bP_E$ we mean the representation of a Euclidean point in $\mathbb{G}_3$. The versors for rotations and translations in PGA and CGA will be denoted by $\bR,\bT$ and $\bR_c,\bT_c,$ respectively.
In order to distinguish among dualities in different algebras and to be consistent with the notation in literature we denote the duality in CGA by an ordinary star symbol, by $*_P$ the duality in PGA and by $*_E$ the duality in $\mathbb{G}_3,$ respectively.

\section{Geometric algebras for Euclidean geometry} \label{GAforE}

It is well known that the geometric algebra $\mathbb{G}_3$ is useful for the description of vector geometry and rotations, see e.g. \cite{Dorst} for the geometric background. If we want to describe Euclidean geometry, we need to add a null vector in order to represent translations. The minimal way is to raise the dimension by one and add a one-dimensional space of null vectors. This model is known as the projective geometric algebra (PGA). However, this procedure returns a Clifford algebra with a degenerate quadratic form. Another way is to raise the dimension by two and add a symplectic two--dimensional vector space. Then the quadratic form is nondegenerate with indefinite signature and we have even two linearly independent null vectors. The resulting model is known as the conformal geometric algebra (CGA). We list the basic facts and formulae in both models.

\subsection{Euclidean geometry in CGA}
From the algebraic point of view, CGA is a Clifford algebra defined by a nondegenerate quadratic form of signature $(4,1,0).$  Vectors 
$ 
\e_0,\e_1,\e_2,\e_3,\e_\infty 
$
denote an orthogonal basis of the generating vector space $\mathbb{R}^{4,1}$ with inner product given by the quadratic form
\begin{align} \label{BCGA}
B=\begin{pmatrix}
0&0 &-1\\
0&1_{3 \times 3} & 0\\
-1 & 0&0
\end{pmatrix}.
\end{align} 
Hence $\e_0,\e_\infty$ are null vectors and $\e_0\cdot\e_\infty=\e_\infty\cdot\e_0=-1.$ The duality in CGA is defined by
\begin{align} \label{CGAduality}
\bA_c^*=\bA_c\bI_c^{-1}=\bA_c \cdot \bI_c^{-1},
\end{align}
 where $\bI_c=\e_{0123\infty}$ is the conformal {\it pseudoscalar}, $\bI_c^{-1}=-\bI_c$.
The geometry in CGA is defined by the following embedding of a Euclidean point $\bP_E$ into the geometric algebra $\mathbb{G}_3,$ particularly onto an element $\bP_c$ of the form
\begin{align} \label{CGApoint}
\bP_c = \e_0 + \bP_E +\tfrac12 (\bP_E\cdot\bP_E)\e_\infty,
\end{align}
where $\bP_E\cdot\bP_E$ coincides with the square of Euclidean norm. In coordinates, if $\bP_E=x\e_1+y\e_2+z\e_3$, then we get a well known formula
\begin{align}
\bP_c = \e_0 + x\e_1+y\e_2+z\e_3 +\tfrac12 (x^2+y^2+z^2)\e_\infty
\end{align}
together with the standard property $\bP_c\cdot\bP_c=0.$
The nondegeneracy of the quadratic form \eqref{BCGA} implies that we have two mutually dual representations of geometric objects in CGA. Namely, a multivector $\bA_c$ is the {\it direct representation} (also called OPNS representation) of an object in CGA if and only if the object is formed exactly by points $\bP_c$ satisfying
\begin{align}
\bP_c\wedge\bA_c =0.
\end{align}
The duality in CGA given by \eqref{CGAduality} defines a {\it dual representation} (or IPNS representation). Namely, the same object can be also represented by $\bA_c^*$ in the sense that it is formed exactly by points $\bP_c$ satisfying
\begin{align}
\bP_c\cdot \bA_c^* =0,
\end{align} 
where the dot denotes the {\it inner product}.
Note that this duality of representations follows from the duality between inner and outer product.
The direct representation is useful for constructing geometric objects from points while the advantage of the dual representation is that one can easily read off the internal parameters of the objects.

Taking outer products of points in CGA we get representatives of general spheres spanned by these points, i.e. point pairs, circles, spheres and also flat objects if one of the points lies at infinity. Thus for a flat point $\bF\bP_c$, a line $\bell_c$ spanned by points $\bP_{1c},\bP_{2c}$ and a plane $\bp_c$ spanned by points $\bP_{1c},\bP_{2c},\bP_{3c}$ we have the following respective representations
\begin{align} \label{CGAobjects}
\bF\bP_c&=\bP_c\wedge\e_\infty,\\
\bell_c&=\bP_{1c}\wedge\bP_{2c}\wedge\e_\infty, \\
\bp_c&=\bP_{1c}\wedge\bP_{2c}\wedge\bP_{3c}\wedge\e_\infty.
\end{align}
%

Let us also recall that Euclidean transformations are represented by versors which act on objects by conjugation. The versor for translation by vector $\vec{t}$, which we identify with $\bt_E\in\mathbb{G}_3$, is given by
\begin{align} \label{CGAtranslator}
\bT_c=e^{-\frac12\bt_E\e_\infty}=1-\tfrac12\bt_E\e_\infty.
\end{align}
The versor for rotation by an angle $\alpha$ and with normalised dual representation $\bell^*,$ i.e. $\bell^*\cdot\bell^*=-1,$ is represented by CGA element 
\begin{align} \label{CGArotor}
\bR_c=e^{\frac12 {\alpha}\bell^*}=\cos\tfrac{\alpha}2+\sin\tfrac{\alpha}2\bell^*.
\end{align}

\begin{rem}
	For a general dimension $n$, we consider the orthogonal basis $\e_0,\e_1,\dots,\e_n,\e_\infty\in\mathbb{R}^{n+1,1}$ with inner product given by matrix \eqref{BCGA}, where the middle block is the $n\times n$ identity matrix. The duality prescription \eqref{CGAduality} remains, however, the pseudoscalar satisfies
	\begin{align*}
	\bI_c^{-1}=(-1)^{n(n-1)/2}\bI_c,
	\end{align*}
	thus the duality map is either an involution or an anti--involution depending on the dimension. The equation \eqref{CGApoint} defining a CGA point  is independent of the dimension as well as formulae \eqref{CGAtranslator} and \eqref{CGArotor} for CGA transformations. The representations of Euclidean objects (flats) in CGA are $\bP_{1c}\wedge\cdots\wedge\bP_{kc}\wedge\e_\infty$ for any $1\leq k\leq n.$
\end{rem}


%
%
%

\subsection{Euclidean geometry in PGA}
We shall use conventions and notation as close to \cite{gunn1,gunn2} and \cite{3dpga} as possible, however, we modify the sign conventions for the duality in PGA slightly. The reason for that will be clarified in the sequel. Basically, the reason is to fit the PGA concept to the concept of CGA. We want to stress that these changes do not influence the validity of general formulae in \cite{3dpga} for incidence relations, projections, rejections etc.

Algebraically, PGA is a Clifford algebra generated by a degenerate quadratic form of signature $(3,0,1)$. We consider a basis of the generating vector space $\mathbb{R}^{3,0,1}$ in which the quadratic form is given by matrix 
\begin{align} \label{BPGA}
B=\begin{pmatrix}
0&0\\
0&1_{3 \times 3}\\
\end{pmatrix},
\end{align} 
and we denote this basis by $\e_0,\e_1,\e_2, \e_3$, i.e. $\e_0$ is a null vector. Note that the symbols for basis elements are the same as symbols for the corresponding basis elements in CGA in this notation, which is usual in literature. Indeed, PGA can be viewed as the subalgebra of CGA spanned by these elements. However for relating the geometry of PGA and CGA,  
the $\e_0$ from PGA plays rather the role of the element $\e_\infty$ in the CGA notation. This relation is discussed in detail in the following sections.  

It is well known that the duality in PGA cannot be obtained as the duality in CGA, by the division with the pseudoscalar, because the quadratic form \eqref{BPGA} defining PGA is degenerate. The idea of \cite{gunn1,gunn2} is to use a Hodge duality approach. A similar idea may be found in \cite{Lounesto}. Namely, the dual to a basis element $\bA$ can be defined as the complement to the projective pseudoscalar,  $\bA\wedge \bA^* =\bI$ or in the reverse order. However, such a map is neither involutive nor anti--involutive in general and the signs coming from such definition do not fit in the CGA duality. Therefore we suggest a new definition in the next Section, see Definition \ref{PGAdualityCGA}. Note that a specific duality concept for constant curvature spaces has been introduced in \cite{Lasenby}.

Representation of the Euclidean geometry in PGA is given by the embedding of a point.
In coordinates, an Euclidean point $\bP_E=x\e_1+y\e_2+z\e_3$ is represented in PGA by the multivector of grade three 
\begin{align} 
\bP=x\e_{032}+y\e_{013}+z\e_{021}+\e_{123}.
\end{align}
The formula for a point in PGA can be written in a coordinate free way using the Euclidean duality $*_E$, given by the division with the Euclidean pseudoscalar $\bI_E=\e_{123},$ particularly $\bP=\e_0\bP_E^{*_E}+\bI_E,$ which can be rewritten as
\begin{align} \label{PGApoint}
\bP=\bI_E+\bP_E^{*_E}\e_0,
\end{align}
since the grade of $\bP_E^{*_E}$ is two.
The degeneracy of the PGA quadratic form \eqref{BPGA} causes us to have only one representation of geometric objects in PGA. Since the grade of points is three, therefore sub--maximal, the only way to represent objects  is as the null space of the regressive product (RPNS representation). Recall that the regressive product is dual to the outer product, i.e. 
$
(\bA \vee \bB)^{*}=\bA^* \wedge \bB^*.
$
Hence a point represented by $\bP$ belongs to an object represented by $\bA$ if and only if
\begin{align}
\bP \vee \bA =0.
\end{align}

By regressive products of respective points in PGA we get representatives of flat objects. Thus for a line $\bell$ spanned by points $\bP_{1},\bP_{2}$ and a plane $\bp$ spanned by points $\bP_{1},\bP_{2},\bP_{3}$ we have the representations
\begin{align}
\bell&=\bP_1 \vee \bP_2, \label{PGAline}\\
\bp&=\bP_1 \vee \bP_2 \vee \bP_3. \label{PGAplane}
\end{align}

One can find several formulae for PGA representations of Euclidean transformations in \cite{gunn1}, however a direct formula for translation by vector $\bt_E$ is missing. If one reads between the lines, the versor for translation is given by
\begin{align}
\bT=e^{-\frac12 \e_0\bt_E}=1-\tfrac12\e_0\bt_E.
\end{align}
Note that this formula is a direct consequence of formula \eqref{CGAtranslator} for the translator in CGA and  Proposition \ref{GeometricEmbedding} which will be proved later.
Rotation is realized by \eqref{CGArotor}, the same versor as in CGA, thus $\bR=\bR_c.$ However, the dual line $\bell^*$ in the formula must be replaced by $\bell$ given by \eqref{PGAline} in this case.
\begin{rem}
	For a general dimension $n,$ we consider the orthogonal basis $\e_0,\e_1,\dots,\e_n\in\mathbb{R}^{n,0,1}$ with inner product given by matrix \eqref{BPGA}, where the lower--right block is the $n\times n$ identity matrix. The equation  \eqref{PGApoint} defines a PGA point in any dimension. The representations of transformations are independent of the dimension and the Euclidean objects are given by $\bP_{1}\vee\cdots\vee\bP_{k}$ for any $1\leq k\leq n.$ The duality in $n$D PGA is discussed in the next Section, see Remark \ref{remDual} .
\end{rem}

\section{PGA in CGA} \label{PinC}
Our main finding will be that the usual choice of basis and inner product in CGA, 
gives two distinct subalgebras of CGA which are both isomorphic to PGA and also an involutive isomorphism on CGA that relates these two subalgebras. In such identification, the duality in PGA can be seen as a "twisted" CGA duality. Moreover, if the right subalgebra is chosen, representations of flat objects in CGA and PGA coincide.

\subsection{Duality in PGA}
In CGA, the subalgebra formed by  elements which do not contain $\e_\infty$ is present as well as the subalgebra formed by  elements which do not contain $\e_0.$ The former is generated by $\langle \e_0,\e_1,\e_2,\e_3\rangle\cong\mathbb{R}^{3,0,1}$ and will be denoted by $\CGA_0$ and the latter is generated by $\langle \e_1,\e_2,\e_3,\e_\infty\rangle\cong\mathbb{R}^{3,0,1}$ and will be denoted by $\CGA_\infty$ in the sequel. The notation accents the present null vector and stresses the fact that both are subalgebras in CGA.

 The choice of CGA basis and the inner product defines also a distinct involution in CGA which relates subalgebras $\CGA_0$ and $\CGA_\infty$. Namely, it defines an isomorphism between vector space $\mathbb{R}^{4,1}$ and its dual, by taking the usual dual basis. Quadratic form \eqref{BCGA} defines another isomorphism between these spaces which is known as the musical isomorphism in literature. The composition of these two isomorphisms is a bijective linear map of $\mathbb{R}^{4,1}$ onto itself, for $i=1,2,3$ given by 
\begin{align} \label{sharp}
\sharp: \e_i\mapsto \e_i, 
\e_0\mapsto -\e_\infty, \e_\infty\mapsto -\e_0,
\end{align}
where we used the notation of musical isomorphism in order to distinguish between the duality on vector space $\mathbb{R}^{4,1}$ and the usual CGA duality. Clearly, this map is a linear involution and preserves the quadratic form in CGA,  thus it defines a unique extension to CGA as a homomorphism of Clifford algebras. In the following, we will use the symbol $\sharp$ also for this extension. Using this notation we can summarize the above observations into the following statement.
\begin{prop} \label{PGAinCGA}
	The choice of basis of null vectors $\e_0,\e_\infty$ in CGA defines two subalgebras $CGA_0$ and $CGA_\infty$ isomorphic to PGA and an involutive isomorphism $\sharp$ between them which acts by replacing $\e_0$ with $-\e_\infty$ in each basis blade.
\end{prop}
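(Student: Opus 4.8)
The plan is to obtain all three maps in the statement from the universal property of Clifford algebras, so that everything reduces to elementary checks on the generating vector space $\mathbb{R}^{4,1}$.

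First I would identify $\CGA_0$ with $\PGA$. The subspace $V_0=\langle\e_0,\e_1,\e_2,\e_3\rangle\subset\mathbb{R}^{4,1}$ satisfies $\e_0\cdot\e_0=0$, $\e_0\cdot\e_i=0$ and $\e_i\cdot\e_j=\delta_{ij}$, so the restriction of the CGA bilinear form \eqref{BCGA} to $V_0$ has matrix \eqref{BPGA}; hence $(V_0,B|_{V_0})$ is isometric to the generating space of $\PGA$. The inclusion $V_0\hookrightarrow\CGA$ is a Clifford map for this restricted form, so by the universal property it extends to an algebra homomorphism $\mathrm{Cl}(V_0,B|_{V_0})\to\CGA$ whose image is exactly the subalgebra generated by $V_0$, namely $\CGA_0$. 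This homomorphism is injective: completing the ordered basis $\e_0,\e_1,\e_2,\e_3$ of $V_0$ to the ordered basis $\e_0,\e_1,\e_2,\e_3,\e_\infty$ of $\mathbb{R}^{4,1}$, the blades $\e_S$ with $S\subseteq\{0,1,2,3\}$ are part of the standard basis of $\CGA$ and therefore linearly independent, and their span is $\CGA_0$. Thus $\CGA_0\cong\mathrm{Cl}(V_0,B|_{V_0})\cong\PGA$. Running the same argument for $V_\infty=\langle\e_1,\e_2,\e_3,\e_\infty\rangle$, in which $\e_\infty$ now plays the null direction, gives $\CGA_\infty\cong\PGA$.

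Next I would check the map $\sharp$ of \eqref{sharp}. It is linear by construction, and $\sharp^2=\mathrm{id}$ on $\mathbb{R}^{4,1}$ since $\sharp^2(\e_0)=\sharp(-\e_\infty)=\e_0$, $\sharp^2(\e_\infty)=\e_\infty$ and $\sharp^2(\e_i)=\e_i$. It preserves the quadratic form: on basis vectors one has $\langle\sharp\e_0,\sharp\e_\infty\rangle=\langle-\e_\infty,-\e_0\rangle=\langle\e_0,\e_\infty\rangle=-1$, $\langle\sharp\e_0,\sharp\e_0\rangle=\langle\e_\infty,\e_\infty\rangle=0$, $\langle\sharp\e_0,\sharp\e_i\rangle=-\langle\e_\infty,\e_i\rangle=0$ and $\langle\sharp\e_i,\sharp\e_j\rangle=\delta_{ij}$, so $\sharp$ is an isometry of $(\mathbb{R}^{4,1},B)$. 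The universal property then produces a unique extension of $\sharp$ to an algebra homomorphism $\CGA\to\CGA$; by functoriality the extension of $\sharp^2=\mathrm{id}_{\mathbb{R}^{4,1}}$ is $\mathrm{id}_{\CGA}$, so the extended $\sharp$ is an involutive automorphism of $\CGA$.

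Finally I would observe that $\sharp$ maps $V_0$ onto $V_\infty$, since it sends $\e_0\mapsto-\e_\infty$ and fixes $\e_1,\e_2,\e_3$, and $-\e_\infty,\e_1,\e_2,\e_3$ span $V_\infty$. As $\sharp$ is an algebra homomorphism, it carries the subalgebra generated by $V_0$ onto the subalgebra generated by $V_\infty$; that is, $\sharp$ restricts to an isomorphism $\CGA_0\to\CGA_\infty$, and symmetrically maps $\CGA_\infty$ back to $\CGA_0$, with $\sharp^2=\mathrm{id}$ throughout. Its effect on basis blades is then immediate: writing a basis blade of $\CGA_0$ as $\e_S$ with $S\subseteq\{0,1,2,3\}$, if $0\notin S$ then $\sharp(\e_S)=\e_S$, while if $S=\{0\}\cup S'$ with $S'\subseteq\{1,2,3\}$ then $\e_S=\e_0\,\e_{S'}$ and $\sharp(\e_S)=(-\e_\infty)\,\e_{S'}$, which is precisely $\e_S$ with $\e_0$ replaced by $-\e_\infty$, as claimed. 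The argument is routine throughout; the one point deserving care is the injectivity of $\mathrm{Cl}(V_0,B|_{V_0})\to\CGA$, which survives the degeneracy of $B|_{V_0}$ because it uses only that the blade basis of the subalgebra sits inside the blade basis of the ambient algebra. It is also worth noting that being an isometry does not by itself pin down the signs in \eqref{sharp}: those come from composing the dual-basis identification with the musical isomorphism of $B$, and they are exactly what will make $\sharp$ compatible with the CGA duality in the next subsection.
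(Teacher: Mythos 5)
Your proposal is correct and follows essentially the same route as the paper, which presents this proposition as a summary of the preceding observations: the restricted bilinear forms on $\langle\e_0,\e_1,\e_2,\e_3\rangle$ and $\langle\e_1,\e_2,\e_3,\e_\infty\rangle$ coincide with \eqref{BPGA}, and $\sharp$ is an isometric linear involution of $\mathbb{R}^{4,1}$ that therefore extends uniquely to a Clifford algebra automorphism exchanging $\CGA_0$ and $\CGA_\infty$. The only substantive addition is your explicit injectivity check for the degenerate subalgebra via the blade basis, which the paper leaves implicit but which is a worthwhile point of care.
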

Let us describe the structure of subalgebras CGA$_0$ and CGA$_\infty$ and the isomorphism $\sharp$ in a more detailed way. Intersection of these subalgebras is generated by $\langle \e_1,\e_2,\e_3\rangle$ with inner product given by  the identity matrix, thus it forms the algebra $\mathbb{G}_3$. The map $\sharp$ that switches between the two subalgebras   acts as the identity on this intersection. On the complement of the union of the subalgebras to CGA, it acts as minus identity. Note that the union of subalgebras contains the elements with either $e_0$ or $e_\infty$ only, therefore the complement to CGA is not empty containing elements with both $e_0$ and $e_\infty$. 
 Schematically, this is depicted in Figure \ref{2copies}.
\begin{figure}[h] \label{2copies}
 	\includegraphics[scale=0.45]{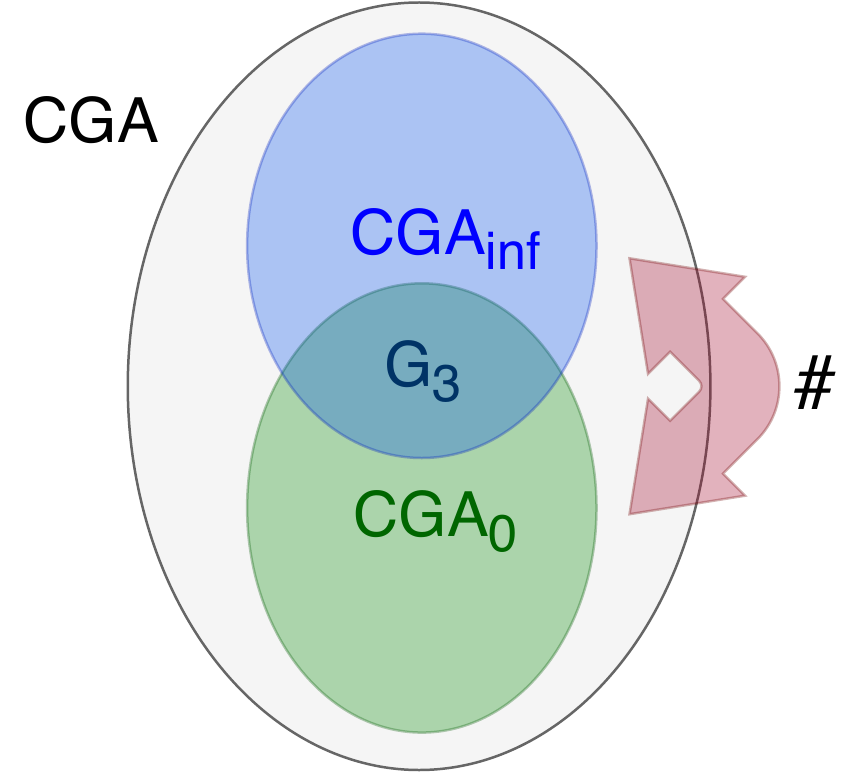}
 	\caption{Two copies of PGA inside CGA}
\end{figure}
We stress that $\sharp$ is an isomorphism of Clifford algebras and thus it preserves all products in the algebra.

Once we know how to identify PGA with any of the two subalgebras in CGA, it is easy to understand duality in PGA. It can be defined in the same way as the duality in CGA, by multiplication with a suitable pseudoscalar inverse. But we have to multiply by the inner product and also the pseudoscalar inverse must be taken with respect to the inner product. Such an inversion exists but it lies in the other subalgebra. Indeed, it is easy to see that $\bI\cdot\bI^\sharp=1$ and thus for  $\bI=\e_{0123}\in\CGA_0,$  the inner product inverse is $\bI^\sharp=\e_{123\infty}\in\CGA_\infty$ and vice versa: for $\bI^\sharp=\e_{123\infty}\in\CGA_\infty,$ the inner product inverse is $(\bI^\sharp)^\sharp=\bI.$
\begin{defn} \label{PGAdualityCGA}
Understanding PGA as a subalgebra of CGA, {\it projective duality} is an involution  defined for each $\bA\in \PGA$ by
\begin{align} \label{PGAduality}
\bA^{*_P}=(\bA \cdot \bI^\sharp)^\sharp = \bA^\sharp \cdot \bI.
\end{align}
\end{defn}
Note that the equality in the definition follows from the fact that $\sharp$ preserves inner product. 
In the diagram visualisation, this can be described by the commutative diagram in Figure \ref{PGAdualityDiagram}. The duality in PGA is the composition from left to right which is the same as the composition from right to left since it is an involution.
\begin{figure}[h] \label{PGAdualityDiagram}
	\includegraphics[scale=0.4]{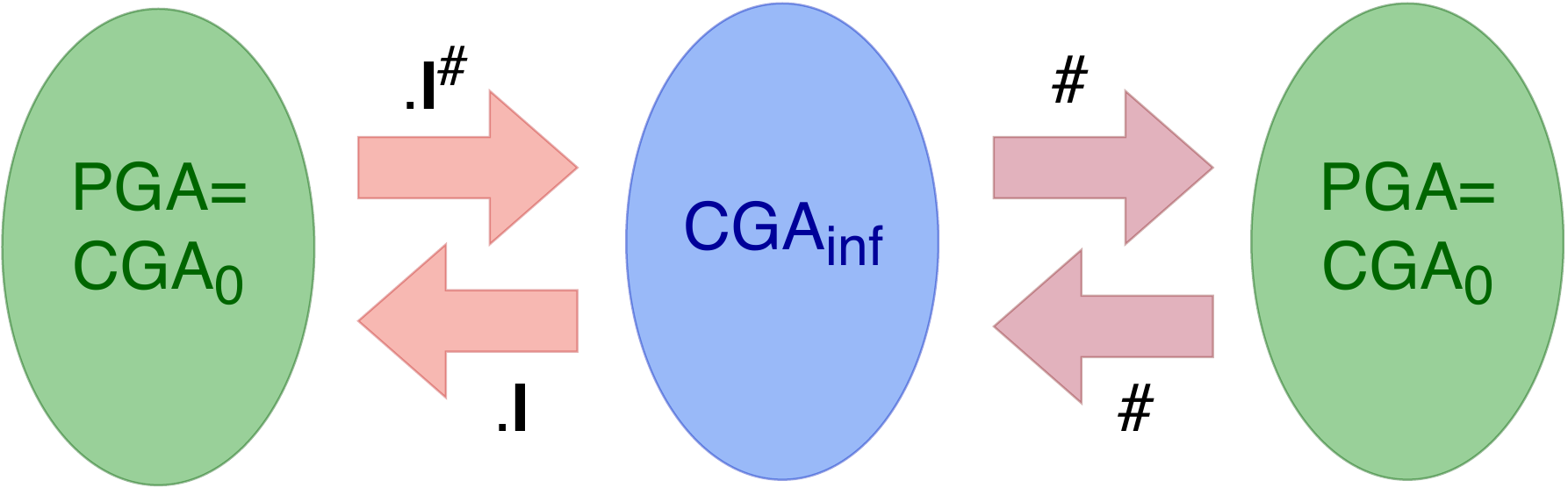}
	\caption{Duality in PGA viewed as a subalgebra of CGA}
\end{figure}

Having the duality in PGA defined according to the Definition \ref{PGAdualityCGA}, it is easy to relate it to the standard duality in CGA. Namely, it is a CGA duality twisted by a null vector as follows. 
\begin{prop} \label{PGAviaCGA}
	For $\bA\in\PGA$ the following identities hold
\begin{align}  \label{PGAduality2}
\bA^{*_P}=(\bA \wedge \e_\infty)^{*\sharp}=(\bA^\sharp \wedge \e_0)^*.
\end{align}
\end{prop}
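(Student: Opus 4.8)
The plan is to prove the two claimed identities in \eqref{PGAduality2} by reducing everything to the definition \eqref{PGAduality} together with the elementary relation between the CGA pseudoscalar $\bI_c=\e_{0123\infty}$ and the PGA pseudoscalar $\bI=\e_{0123}$, namely $\bI_c=\bI\wedge\e_\infty$, and the corresponding facts about $\sharp$. Since $\sharp$ is an involutive homomorphism of Clifford algebras that preserves the inner product (Proposition \ref{PGAinCGA}), it suffices to establish one identity and obtain the other by applying $\sharp$; I would establish $\bA^{*_P}=(\bA\wedge\e_\infty)^{*\sharp}$ and then note that applying $\sharp$ and using $\e_\infty^\sharp=-\e_0$, $\sharp^2=\mathrm{id}$ yields $\bA^{\sharp*_P}=(\bA^\sharp\wedge\e_\infty^\sharp\cdot(-1))^{*}\!=\!\dots$, i.e. the second form, after a sign bookkeeping check; in fact by symmetry the cleanest route is to prove both simultaneously by relating them to \eqref{PGAduality}.

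First I would compute, for $\bA\in\PGA=\CGA_0$ (so $\bA$ contains no $\e_\infty$), the expression $(\bA\wedge\e_\infty)\bI_c^{-1}$. Using $\bI_c^{-1}=-\bI_c=-\e_{0123\infty}$ and the fact that $\bA\wedge\e_\infty=\bA\e_\infty$ for such $\bA$ (the wedge is the full product since there is no shared factor), one gets $(\bA\wedge\e_\infty)^* = \bA\e_\infty\bI_c^{-1}$. Now $\e_\infty\bI_c^{-1} = -\e_\infty\e_{0123\infty}$; since $\e_\infty\cdot\e_\infty=0$ and $\e_\infty$ anticommutes with $\e_1,\e_2,\e_3$ while $\e_\infty\e_0 = -1-\e_0\e_\infty$ (from $\e_0\cdot\e_\infty=-1$), a short rearrangement gives $\e_\infty\e_{0123\infty}=\e_{123\infty}$ up to a sign I would pin down carefully. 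The upshot is that $(\bA\wedge\e_\infty)^*$ equals $\pm\,\bA\cdot\e_{123\infty}=\pm\,\bA\cdot\bI^\sharp$, i.e. it is $\bA\cdot\bI^\sharp$ up to a sign, living in $\CGA_\infty$. Applying $\sharp$ then turns $\bA\cdot\bI^\sharp$ into $(\bA\cdot\bI^\sharp)^\sharp$, which by \eqref{PGAduality} is exactly $\bA^{*_P}$; so the identity holds provided the accumulated sign is $+1$.

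Therefore the real content of the proof is a careful sign computation: verifying that the reordering of the five anticommuting-or-null generators $\e_0,\e_1,\e_2,\e_3,\e_\infty$ produces no net sign, and that this is consistent with the normalization $\bI\cdot\bI^\sharp=1$ recorded just before Definition \ref{PGAdualityCGA}. I expect this to be the main obstacle — not because it is deep, but because degenerate and null basis vectors make the usual "move it past $k$ factors, pick up $(-1)^k$" reasoning fail for the $\e_0\e_\infty$ interaction, so one must handle that contraction term explicitly and check it drops out (it does, because $\e_\infty$ wedged against $\bA$ which already contains $\e_0$ would vanish, so only the term that keeps $\e_0$ survives). Once the sign is confirmed for the first identity, the second identity $\bA^{*_P}=(\bA^\sharp\wedge\e_0)^*$ follows formally: replace $\bA$ by $\bA^\sharp\in\CGA_\infty$ in the parallel computation, or simply apply $\sharp$ to the first identity and use that $\sharp$ preserves $*$ in the twisted sense made precise in Proposition \ref{PGAviaCGA}'s statement together with $\e_\infty^\sharp=-\e_0$ and $\bI_c^\sharp=\pm\bI_c$. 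I would close by remarking that the two right-hand sides of \eqref{PGAduality2} are therefore equal, which re-proves the equality already noted in Definition \ref{PGAdualityCGA} from the CGA side.
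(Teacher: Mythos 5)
Your overall strategy matches the paper's: establish $(\bA\wedge\e_\infty)^{*}=\bA\cdot\bI^\sharp$, apply $\sharp$ and invoke Definition~\ref{PGAdualityCGA}, then obtain the second identity by pushing $\sharp$ through, with the two minus signs from $\e_\infty^\sharp=-\e_0$ and $(\bI_c^{-1})^\sharp=-\bI_c^{-1}$ cancelling. However, your central computation has a genuine gap. The step $\bA\wedge\e_\infty=\bA\e_\infty$ is false whenever the blade $\bA\in\CGA_0$ contains $\e_0$ as a factor: absence of a shared basis vector is not the relevant condition, orthogonality is, and $\e_0\cdot\e_\infty=-1\neq 0$. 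For example $\e_0\e_\infty=-1+\e_0\wedge\e_\infty$, so the geometric product acquires a contraction term of grade $|\bA|-1$. Your parenthetical explanation for why this term is harmless --- that ``$\e_\infty$ wedged against $\bA$ which already contains $\e_0$ would vanish'' --- is not correct ($\e_0\wedge\e_\infty\neq 0$), and the extra term $(\bA\cdot\e_\infty)\bI_c^{-1}$ does not drop out: it survives as a nonzero component of grade $6-|\bA|$ alongside the wanted grade-$(4-|\bA|)$ part. One could rescue the argument by an explicit grade projection, but you neither state nor justify that, and your auxiliary relation $\e_\infty\e_0=-1-\e_0\e_\infty$ is also off (it should be $\e_\infty\e_0=-1-\e_0\wedge\e_\infty=-2-\e_0\e_\infty$).

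The paper avoids all of this by never expanding $\bA\wedge\e_\infty$ as a geometric product: it uses the left-contraction identity $(\bA\wedge\e_\infty)\cdot\bI_c^{-1}=\bA\cdot(\e_\infty\cdot\bI_c^{-1})$, valid here because $\bI_c^{-1}$ has top grade so the inner product is a left contraction. This reduces everything to the single vector--blade contraction $\e_\infty\cdot\bI_c^{-1}=(\e_\infty\cdot\e_0)\e_{321\infty}=\bI^\sharp$, and it disposes of the ``careful sign computation'' you flag as the main obstacle, since no reordering of the five generators is ever needed. I recommend restructuring your proof around that identity. Finally, for the second identity, derive it by applying $\sharp$ to the already-established chain $(\bA\wedge\e_\infty)^{*}=\bA\cdot\bI^\sharp$ using $(\bA\wedge\e_\infty)^\sharp=-\bA^\sharp\wedge\e_0$ and $(\bI_c^{-1})^\sharp=-\bI_c^{-1}$, rather than appealing to ``$\sharp$ preserves $*$ in the twisted sense made precise in the statement,'' which as written is circular.
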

\begin{proof}
By definition of PGA duality \eqref{PGAduality}, we need to show $(\bA \wedge \e_\infty)^{*}=\bA\cdot\bI^\sharp$ in order to prove the first identity. Indeed, we compute
\begin{align*}
(\bA \wedge \e_\infty)^{*}=(\bA \wedge \e_\infty)\cdot\bI_c^{-1}=\bA\cdot(\e_\infty\cdot\bI_c^{-1})=\bA\cdot\bI^\sharp.
\end{align*}
The first equality is the CGA duality in terms of the inner product, see \eqref{CGAduality}. The inner product coincides with the left contraction in this case since $\bI_c^{-1}$  is of the highest grade. Then the second equality is a general property of left contraction. The third equality  follows from the fact that $\e_\infty\cdot\bI_c^{-1}=\e_\infty\cdot\e_{0321\infty}=(\e_\infty\cdot\e_0)\e_{321\infty}=-\e_{321\infty}=\bI^\sharp$. The second identity in the proposition follows from the definition of isomorphism $\sharp$. Namely, 
$(\bA \wedge \e_\infty)^{\sharp}=-\bA^\sharp \wedge \e_0$ and $(\bI_c^{-1})^\sharp=-\bI_c^{-1}.$
\end{proof}

We also get a sort of PGA duality between the inner product and outer product similar to the duality between these products in CGA. Namely, supposing the grade of a blade $\bA$ is less or equal to $\bB$, we have
\begin{align} \label{PGAidentity}
	(\bA \wedge \bB)^{*_P}=\bA^\sharp \cdot \bB^{*_P}. 
\end{align}
This formula holds for general multivectors $\bA, \bB$ if we replace the inner product on the right--hand side by left contraction.
It is worth to look also at the relation between the duality in PGA and the usual Euclidean duality. For that we need to express a multivector $\bA\in\PGA$ as a sum $\bC_E+\bD_E\wedge\e_0,$ where $\bC_E,\bD_E\in\mathbb{G}_3.$ Then we compute
\begin{align} \label{PGAduality3}
\bA^{*_P}=(\bC_E+\bD_E\wedge\e_0)^{*_P}=-\bD_E^{*_E}+\bC_E^{*_E}\wedge \e_0.
\end{align} 
This formula is particularly convenient for an explicit computation of duals to the basis blades, see Table \ref{PGAdualitytable}.
\begin{table}[h] \label{PGAdualitytable}
	\begin{center}
		{\small\addtolength{\tabcolsep}{-3pt}
			\begin{tabular}{ |c|c|c|c|c|c|c|c|c|c|c|c|c|c|c|c|c| } 
				\hline
				& $1$ & $\e_0$ & $\e_1$ & $\e_2$ & $\e_3$  & $\e_{01}$ & $\e_{02}$ & $\e_{03}$ & $\e_{12}$ & $\e_{13}$ & $\e_{23}$ & $\e_{012}$ & $\e_{013}$ & $\e_{023}$ & $\e_{123}$ & I \\ 
				\hline
				$\bA$ &  a & b & c & d & e & f & g & h & i & j & k & l & m & n & o & p \\
				\hline
				$\bA^{*_P}$ &  p & o & -n & m & -l & -k & j & -i & -h & g & -f & -e & d & -c & b & a \\
				\hline
			\end{tabular}
		}
	\end{center}
	\caption{PGA duality}
\end{table}

\begin{rem} \label{remDual}
	The definition of isomorphism $\sharp$ is independent of the dimension.  However,
    for general dimension $n$ we have 
    \begin{align} \label{s}
    \bI\cdot\bI^\sharp=(-1)^{n(n+1)/2},
    \end{align}
    thus the inner product inversion to $\bI$ gains this sign and it should also enter in formula \eqref{PGAduality} for the definition of the PGA duality. Note that PGA duality is an involution or anti--involution depending on the dimension. Formulae \eqref{PGAduality2} and \eqref{PGAidentity} hold in any dimension. Indeed, the only change in the proof of Proposition \ref{PGAviaCGA} is that
    $
    \e_\infty\cdot\bI_c^{-1}
    =(-1)^{n(n+1)/2}\bI^\sharp.
    $
    The equation \eqref{PGAduality3} changes in $n$D to
    \begin{align*}
    (\bC_E+\bD_E\wedge\e_0)^{*_P}=(-1)^{n}\bD_E^{*_E}+\bC_E^{*_E}\wedge \e_0.
    \end{align*}
   
\end{rem}

\subsection{Geometry}
All geometric objects in PGA are constructed by means of regressive product which is dual to the outer product, c.f. \eqref{PGAline}, \eqref{PGAplane}. Let us start by computing the projective dual of a point. By applying formula \eqref{PGAduality} for the projective duality to the PGA point \eqref{PGApoint} we get
\begin{align} \label{dualPoint}
\bP^{*_P}=\e_0+\bP_E.
\end{align}
Hence a point in PGA is the dual to its usual homogeneous representation in $\mathbb{R}^4$. Now we compute the representation of PGA point in the subalgebra $\CGA_\infty$. Applying the isomorphism $\sharp$ to formula \eqref{PGApoint} for a point we get
\begin{align} \label{PGApointI}
\bP^\sharp
=\bI_E-\bP^{*_E}_E\e_\infty.
\end{align}
We will show in the following Proposition \ref{GeometricEmbedding} that this is a formula for the dual representation of a flat point in CGA. Indeed, looking at the direct representations of flat objects and Euclidean transformations in CGA we observe that they all  lie in the subalgebra $\CGA_\infty$. Hence this subalgebra is the right copy of PGA in CGA for geometric purposes and the map $\sharp$ gives a geometric embedding in the following sense.
\begin{prop} \label{GeometricEmbedding}
	 Let $\bP,\bell,\bp$ be representations of a point, a line and a plane in PGA, respectively. Then $\bP^\sharp,\bell^\sharp,\bp^\sharp$ are the dual (IPNS) representations of the same point (viewed as a flat point), line and plane in CGA. Moreover,
	if $\bV$ is a versor for a Euclidean transformation in PGA, then $\bV^\sharp$ is the versor for the same transformation in CGA.
\end{prop}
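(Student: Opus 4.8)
The plan is to verify the three object cases and the versor case separately, in each instance using the already-established Proposition~\ref{PGAviaCGA} to convert the intrinsic PGA duality into a ``twisted'' CGA duality, together with the defining formulae \eqref{CGApoint}--\eqref{CGAobjects} and \eqref{PGApoint}--\eqref{PGAplane}.

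\emph{Points.} First I would start from the PGA point $\bP=\bI_E+\bP_E^{*_E}\e_0$ of \eqref{PGApoint} and apply $\sharp$ blade-by-blade using Proposition~\ref{PGAinCGA}: $\sharp$ fixes $\bI_E\in\mathbb{G}_3$ and sends $\e_0\mapsto-\e_\infty$, so $\bP^\sharp=\bI_E-\bP_E^{*_E}\e_\infty$, which is exactly \eqref{PGApointI}. It then remains to recognise the right-hand side as the CGA dual of the flat point $\bF\bP_c=\bP_c\wedge\e_\infty$. The cleanest route is to compute $(\bP_c\wedge\e_\infty)^*=(\bP_c\wedge\e_\infty)\cdot\bI_c^{-1}$ directly from \eqref{CGApoint}: the $\tfrac12(\bP_E\cdot\bP_E)\e_\infty$ term of $\bP_c$ is killed by $\wedge\e_\infty$, leaving $(\e_0+\bP_E)\wedge\e_\infty$, and contracting with $\bI_c^{-1}=-\e_{0123\infty}$ and using $\e_0\cdot\e_\infty=-1$ yields $\bI_E-\bP_E^{*_E}\e_\infty$; matching this with $\bP^\sharp$ closes the point case. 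Alternatively, and more in the spirit of the paper, one observes $\bP^{*_P}=\e_0+\bP_E$ by \eqref{dualPoint}, hence by Proposition~\ref{PGAviaCGA} (second identity, with $\e_0$ and $\sharp$ exchanged appropriately, or rather the first identity) $\bP^\sharp=((\bP^{*_P})^\sharp\wedge\e_0)^{*}\cdot(\cdots)$ — but I expect the direct computation above to be less error-prone.

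\emph{Lines and planes.} For a line $\bell=\bP_1\vee\bP_2$, I would use that $\sharp$ is a Clifford-algebra isomorphism, so it commutes with every product; in particular the regressive product, being defined by $(\bA\vee\bB)^{*_P}=\bA^{*_P}\wedge\bB^{*_P}$ (equivalently via the PGA duality that $\sharp$ intertwines with CGA duality on the relevant subalgebra), is preserved up to the bookkeeping already recorded in Proposition~\ref{PGAviaCGA}. Concretely: dualize, $\bell^{*_P}=\bP_1^{*_P}\wedge\bP_2^{*_P}=(\e_0+\bP_{1E})\wedge(\e_0+\bP_{2E})$ by \eqref{dualPoint}; apply $\sharp$ and \eqref{PGAduality2} to re-express $\bell^\sharp$ as a twisted CGA dual; and compare with $\bell_c=\bP_{1c}\wedge\bP_{2c}\wedge\e_\infty$ from \eqref{CGAobjects}. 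The key algebraic fact to pin down is that $\bP^\sharp$ and $\bP_c\wedge\e_\infty$ both equal the CGA dual of a $2$-blade in $\CGA_\infty$, so the dual representation of $\bell_c$ is $\bell_c^{*}=\bP_{1c}^{\,*}\wedge\cdots$; then the identity $\bell^\sharp=\bell_c^{*}$ follows from the point case by taking wedge products inside the fixed subalgebra and using that $\sharp$-image of a regressive product of PGA flats is the wedge of the $\sharp$-images (a one-line consequence of Propositions~\ref{PGAinCGA} and \ref{PGAviaCGA}). The plane case $\bp=\bP_1\vee\bP_2\vee\bP_3$ is identical with one more factor.

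\emph{Versors.} For the translator, apply $\sharp$ to $\bT=1-\tfrac12\e_0\bt_E$: since $\bt_E\in\mathbb{G}_3$ is fixed and $\e_0\mapsto-\e_\infty$, one gets $\bT^\sharp=1-\tfrac12(-\e_\infty)\bt_E=1+\tfrac12\e_\infty\bt_E=1-\tfrac12\bt_E\e_\infty$ (using $\e_\infty\bt_E=-\bt_E\e_\infty$, as $\e_\infty$ anticommutes with each $\e_i$), which is precisely $\bT_c$ of \eqref{CGAtranslator}. For the rotor, $\bR$ lies entirely in $\mathbb{G}_3\subset\CGA_0\cap\CGA_\infty$, on which $\sharp$ is the identity, so $\bR^\sharp=\bR=\bR_c$ trivially; one only notes that the dual line $\bell^*$ appearing in \eqref{CGArotor} is, by the line case just proved, the $\sharp$-image of the PGA line, consistent with the remark after the PGA transformation formulae. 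Since any Euclidean versor is a product of reflections, hence of such translators and rotors, and $\sharp$ is multiplicative, the general versor statement $\bV^\sharp=\bV_c$ follows by induction on the number of factors. The main obstacle I anticipate is purely notational: keeping the sign conventions straight across the three dualities $*$, $*_P$, $*_E$ and the anticommutation of $\e_\infty$ (resp.\ $\e_0$) with the $\e_i$, so that the twisted-duality identity $\bP^\sharp=(\bP_c\wedge\e_\infty)^{*}$ comes out with no stray sign; I would handle this once and for all on points and then leverage Proposition~\ref{PGAviaCGA} so that the line, plane, and versor cases require no further sign tracking.
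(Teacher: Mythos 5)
Your overall route is the same as the paper's: a direct computation for the point case (showing $(\bP_c\wedge\e_\infty)^*=\bI_E-\bP_E^{*_E}\e_\infty=\bP^\sharp$), then for lines and planes writing $\bell^{*_P}=\bP_1^{*_P}\wedge\bP_2^{*_P}$ with $\bP_i^{*_P}=\e_0+\bP_{iE}=\bP_{ic}+(\e_0\cdot\bP_{ic})\e_\infty$, noting that the $\e_\infty$-proportional corrections are annihilated by the final $\wedge\,\e_\infty$ in Proposition~\ref{PGAviaCGA}, so that $\bell^\sharp=(\bP_{1c}\wedge\bP_{2c}\wedge\e_\infty)^*=\bell_c^*$; and finally multiplicativity of $\sharp$ for versors. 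That is exactly the paper's argument, and your sign checks (e.g.\ $\bT^\sharp=1-\tfrac12\bt_E\e_\infty=\bT_c$) are correct.

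Two side-claims you make are false as stated and should be deleted, even though the surrounding correct computation makes them unnecessary. First, ``the $\sharp$-image of a regressive product of PGA flats is the wedge of the $\sharp$-images'' is not true: $\bP_1^\sharp\wedge\bP_2^\sharp=(\bP_{1c}\wedge\e_\infty)^*\wedge(\bP_{2c}\wedge\e_\infty)^*$ is the dual of the \emph{meet} of two flat points (generically zero in $\mathbb{R}^{4,1}$), not $\bell_c^*$; the correct intertwining is $(\bP_1\vee\bP_2)^\sharp=(\bP_1^{*_P}\wedge\bP_2^{*_P}\wedge\e_\infty)^*$, which is what your explicit computation uses. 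Relatedly, $\bell_c^*$ is not $\bP_{1c}^*\wedge\cdots$; CGA duality exchanges $\wedge$ with the contraction, not with itself. Second, the PGA rotor $\bR=e^{\frac12\alpha\bell}$ lies in $\mathbb{G}_3$ only when the axis passes through the origin; for a general line $\bell$ has $\e_0$-components and $\bR^\sharp\neq\bR$. The correct statement, which you do also gesture at, is $\bR^\sharp=e^{\frac12\alpha\bell^\sharp}=e^{\frac12\alpha\bell_c^*}=\bR_c$ via the already-proved line case; that, together with $\sharp$ preserving the geometric product, is all that is needed for general versors.
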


\begin{proof}
At first we prove the correspondence of points. We need to show that \eqref{PGApointI} is the form of the dual flat point $\bF\bP_c^*=(\bP_c \wedge\e_\infty)^*$. Indeed, we compute
\begin{align}
\bF\bP_c^*&=(\e_{0\infty}+\bP_E\e_\infty)^*=\bI_E-\bP^{*_E}_E \e_\infty=\bP^\sharp.
\end{align}
For the proof of the correspondence between other geometric objects we use the formula for dual point in terms of a conformal point. Since the dual point \eqref{dualPoint} actually is the conformal representation of a point without the quadratic part, we can write
 \begin{align} 
{\bP}^{*_P}
=\bP_c+(\e_0\cdot \bP_c)\e_\infty.
\end{align}
Now we use this formula to a PGA line  $\bell=\bP_1\vee \bP_2.$ 
By the definition of the regressive product and formula \eqref{PGAduality2} for the PGA duality we get
 \begin{align*}
\bell^\sharp&=
\left( (\bP_{1c}+(\e_0\cdot \bP_{1c})\e_\infty) \wedge (\bP_{2c}+(\e_0\cdot \bP_{2c})\e_\infty)\wedge\e_\infty\right)^{*}\\
&=(\bP_{1c}\wedge \bP_{2c} \wedge \e_\infty)^{*}=\bell_c^*.
\end{align*} 
Similarly, for a plane $\bp=\bP_1\vee \bP_2\vee \bP_3$ we get
  \begin{align*}
 \bp^\sharp& = (\bP_{1c}\wedge \bP_{2c} \wedge \bP_{3c} \wedge \e_\infty)^{*}=\bp_c^*,
 \end{align*} 
thus the representation of any object in PGA coincides with its dual representation in CGA. The claim about versors follows from the correspondence between objects and the fact that $\sharp$ respects geometric product and that it commutes with the reversion operation.
\end{proof}
\begin{rem}
	The duality in PGA is defined in such way that dual PGA points always coincide with their homogeneous representations, i.e. the formula \eqref{dualPoint} holds for arbitrary dimension. On the other hand, the correspondence in Proposition \ref{GeometricEmbedding}  changes sign according to the sign of \eqref{s}. Namely, in dimension $n$ we have
	 \begin{align*}
      \bP^\sharp=(-1)^{n(n+1)/2}(\bP_c \wedge \e_\infty)
     \end{align*}  
and also the remaining objects are endowed with the same sign.
\end{rem}
%

%
%
\subsection{Example}
\label{GanjaExample}
It is obvious that we do not need the quadratic parts of points in CGA when we deal with flat objects only. Projective algebra is certainly more efficient in that case. However, we do not need to abandon the CGA concept while using all the amazing formulae from PGA. We only have to keep in mind that points correspond to flat points and that the projective duality differs from the conformal duality, see \eqref{PGAduality} and \eqref{PGAduality2}.

Let us consider an elementary example to demonstrate a convenient way of using PGA inside CGA. We have a point $\bP$ lying on a line $\bell$ which intersects a plane $\bp$ and we have a sphere $\bs$ which also intersects the plane, see Figure \ref{RovinaKoule}.
\begin{figure}[h] \label{RovinaKoule}
	\includegraphics[scale=0.6]{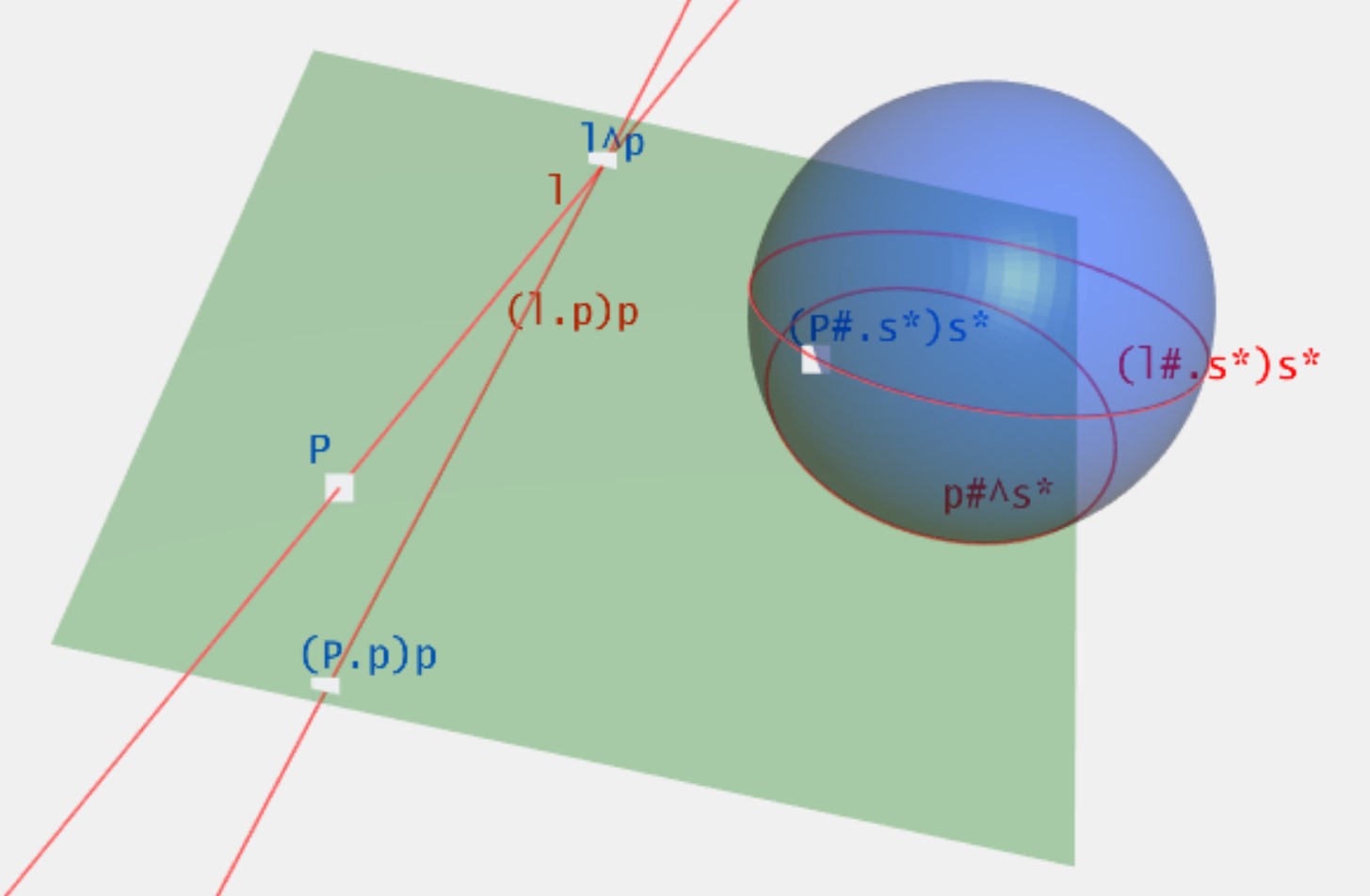}
	\caption{Intersections and projections of flats and rounds}
\end{figure}
Then we can use formulae from PGA to compute the intersection of the line and plane $\bell\wedge\bp$ and the orthogonal projection of the point to the plane $(\bP\cdot\bp)\bp$, the orthogonal projection of the line to the plane $(\bell\cdot\bp)\bp$. If we need to calculate with the sphere, we only need to replace $\e_0$ by $-\e_\infty$ in the representations of flat objects, which is realized by the map $\sharp$, and then we can use all formulae known in CGA. For instance, the intersection of the sphere and the plane is a dual circle $\bp^\sharp\wedge\bs^*$, the orthogonal projection of the point to the sphere is $(\bP^\sharp\cdot\bs^*)\bs^*$ and  the orthogonal projection of the line to the sphere is a circle $(\bell^\sharp\cdot\bs^*)\bs^*$. 

Figure \ref{RovinaKoule} was created by the web--based experimantal platform of Ganja.js \cite{ganja}. The corresponding full code follows.

\lstset{language=Java,basicstyle=\footnotesize,commentstyle=\color{OliveGreen},frame=single,keywordstyle=\color{blue},morekeywords={e1,e2,e3,e4,e5,no,ni}}
\begin{lstlisting}
// Create a Clifford Algebra with 4,1 metric for 3D CGA. 
Algebra(4,1,()=>{ 

// We start by defining a null basis, and upcasting for points
// which we need for rounds only
var ni = 1e4+1e5, no = .5e5-.5e4;
var up = (x)=> no + x + .5*x*x*ni;

// Sharp map in both directions in terms of CGA products  
var IN = (x)=> x + ni^(no<<x) + no^(no<<x);
var NI = (x)=> x + ni^(ni<<x) + no^(ni<<x);

// PGA pseudoscalar  
var I = no^1e1^1e2^1e3;

// PGA duality and upcasting to PGA  
var dual = (x)=> NI(x)<<I;
var upP = (x)=> dual(no+x);

// Definition of regressive product for 2 and three inputs  
var reg = (x,y)=>dual(dual(x)^dual(y));
var reg3 = (x,y,z)=>dual(dual(x)^dual(y)^dual(z));

// Formulas from PGA can be used in CGA

// We define 4 points
var P =  upP(0.5e1-1.5e3);
var P1 = upP(1e1), P2 = upP(1e2), P3 = upP(-1e3);

// Constructing line and plane from points
var l = ()=>reg(P,P2);
var p = ()=>reg3(P1,P2,P3);

// Intersection of line and plane
var Q = ()=>p^l;

// Projection of point and line to plane
var Pperp = ()=>(p<<P)*p;
var lperp = ()=>(p<<l)*p;

// If we want we can also intersect with a sphere or project on it
var s = ()=>up(0.5e2+1e1)-0.5*0.25*ni;

var c = ()=>s^NI(p);
var cperp = ()=>(s<<NI(l))*s;

// Graph the items as CGA elements A -> !NI(A)
document.body.appendChild(this.graph([
0xE0880000, !NI(P), "P",  // point
0xE0880000, !NI(Q), "l^p",  // point
0xE0880000, !NI(Pperp), "(P.p)p",  // point
0xE0000000,!NI(lperp) , "(l.p)p",  // line
0xE00000FF,!cperp , "(l.s*)s*",  // circ
0xE00000FF,!c , "p^s*",  // circ
0xE0000000, !NI(l), "l",  // line
0xE0008800, !NI(p), "p",  // plane
0xE00000FF, !s, "s"   // sphere
],{conformal:true,gl:true,grid:false})); 
});
\end{lstlisting}

\section{Conclusion}
We introduced a notation for PGA objects that is compatible with their CGA description, once PGA is understood as an subalgebra of CGA. We also solved the issues with the noninvertibility of the PGA pseudoscalar in computing duality and showed the exact forms of dual counterparts to geometric primitives in PGA using another copy of PGA in CGA. This has great potential for symbolic calculations and their software implementation, because we can flexibly switch between PGA notation which is efficient for flat objects manipulation and CGA operations on round elements in our problems. The next step is to show the advantages of this approach in various computational platforms, together with code optimisation and applications.

\bibliographystyle{plain}\addcontentsline{toc}{section}{References}

\begin{thebibliography}{1}
	
\bibitem{Dorst}Dorst, L., Fontijne, D., Mann, S.: Geometric Algebra for Computer Science: An
Object-Oriented Approach to Geometry. Morgan Kaufmann Publishers Inc.,
Burlington (2007).


\bibitem{gunn1} Gunn, C.G.: 
{\em Doing Euclidean Plane Geometry Using Projective Geometric Algebra},
Adv. Appl. Clifford Algebras (2017) 27: 1203. https://doi.org/10.1007/s00006-016-0731-5

\bibitem{gunn2} Gunn, C.G.:  
Geometric Algebras for Euclidean Geometry
Adv. Appl. Clifford Algebras (2017) 27: 185. https://doi.org/10.1007/s00006-016-0647-0

\bibitem{3dpga} Gunn, C.G., De Keninck,  S.: {\em
	3D PGA Cheat Sheet, An extensive reference
	with 3D PGA formulas}, online: bivector.net

\bibitem{ganja}
De Keninck,  S.: ganja.js,
   \url{https://zenodo.org/record/3635774} (2020).


\bibitem{hild1} Hildenbrand, D.: Foundations of Geometric Algebra Computing. 
Springer Science \& Business Media (2013).

\bibitem{hild2} Hildenbrand, D.: Introduction to Geometric Algebra Computing. Boca Raton, USA: Chapman and Hall/CRC (2018). 

\bibitem{h0} Hrdina, J., N\'{a}vrat, A., Va\v{s}\'{i}k, P.: \textit{Control of 3- Link Robotic Snake Based on Conformal Geometric Algebra},  Adv. Appl. Clifford Algebr. \textbf{26} (3), 2016, 1069-1080.

\bibitem{h1}
Hrdina, J., N\' avrat, A., Va\v s\' ik, P., Matou\v sek, R.: \textit{CGA-based robotic snake control}, Adv. Appl. Clifford Algebr. \textbf{27}(1), 621--632, 2017.

%

\bibitem{Lasenby} Lasenby A.: \textit{Rigid Body Dynamics in a Constant Curvature Space and the ‘1D-up’ Approach to Conformal Geometric Algebra.} In: Dorst L., Lasenby J. (eds) Guide to Geometric Algebra in Practice. Springer, London  (2011).

\bibitem{Lounesto} Lounesto, P.: Clifford Algebra and Spinors. 2nd edn. CUP, Cambridge (2006).
%
%


\end{thebibliography}

\end{document}